\def\Z{\mathbb{Z}}
\def\N{\mathbb{N}}
\def\f{{\mathbb F_p}}
\newtheorem{thm}{\bf Theorem}[section]
\newtheorem{lemma}[thm]{\bf Lemma}
\newtheorem{prop}[thm]{\bf Proposition}
\theoremstyle{definition}
\newtheorem*{defi}{\bf Definition}
\newtheorem*{rem}{\bf Remark}
\newtheorem{fact}{Fact}
\begin{document}

\title[Covering shrinking polynomials by quasi progressions]{Covering shrinking polynomials by quasi progressions}

 \author[Norbert Hegyv\'ari]{Norbert Hegyv\'ari}
 \address{Norbert Hegyv\'{a}ri, ELTE TTK,
E\"otv\"os University, Institute of Mathematics, 
P\'{a}zm\'{a}ny st. 1/c, Budapest, Hungary and associated member of Alfr\'ed R\'enyi Institute of Mathematics, Hungarian Academy of Science, H-1364 Budapest, P.O.Box 127.}
 \email{hegyvari@renyi.hu}

\begin{abstract}
Erd\H os introduced the quantity $S=T\sum^T_{i=1}|X_i|$, where $X_1,\dots, X_T$ are arithmetic progressions that cover the squares up to $N$. He conjectured that $S$ is close to $N$, i.e. the square numbers cannot be covered "economically" by arithmetic progressions. S\'ark\"ozy confirmed this conjecture and proved that $S\geq cN/\log^2N$. In this paper we extend this to shrinking polynomials and so-called $\{X_i\}$ quasi progressions.

AMS 2010 Primary 11B75, 11B25 Secondary 11B83, 

Keywords: Covering problems, quasi progressions, 
\end{abstract}

 \maketitle

\begin{center}

\end{center}

\section{Introduction}

\vskip0.7cm

A long-standing and challenging problem in combinatorial number theory is to give an upper bound on the number of squares in any arithmetic progression. In relation to this problem, Erd\H os posed the following question, which he formulated as follows: Is it true that square numbers cannot be "economically" covered by arithmetic progressions? More precisely let $X_i=\{m_ij+r_i\}_{j=1}^{k_i}\subseteq \{1,2\dots, ,N\}$, $i=1,2,\dots, T$ be a system of arithmetic progressions such that $
\bigcup_{i=1}^T\{m_ij+r_i\}_{j=1}^{k_i}\supseteq \mathcal{Q}_N$, where $\mathcal{Q}_N$ is the set of squares up to $N$, i.e. $\mathcal{Q}_N:=\{1^2,2^2,3^2,\dots, \lfloor \sqrt{N}\rfloor^2\}$. Let $F(\mathcal{Q}_N):=\min_{\{X_i\},T}T\sum_{i=1}^Tk_i$. Is it true that $F(\mathcal{Q}_N)>N^{1-\varepsilon}$, ($\varepsilon>0)$? 

Maybe this conjecture was motivated by the following two examples. When we cover $\mathcal{Q}_N$ by arithmetic progression with length two (i.e. the squares are covered by piecewise), then $T\sim \sqrt{N}$, so $T\sum_{i=1}^Tk_i\sim 2N$. The other example, when $\mathcal{Q}_N$ is covered by the interval $\{1,2,\dots,N\}$, then $T=1$, so $T\sum_{i=1}^Tk_i=N$. 

In $[8]$ this conjecture was proved in a sharper form:
\begin{thm}[S\'ark\"ozy]
There exists an $N_0$ such that for all $N>N_0$ and 
$$
F(\mathcal{Q}_N)>\frac{1}{700}\frac{N}{\log^2N}.
$$
\end{thm}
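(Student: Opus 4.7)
The plan is to combine an upper bound on the number of squares in each individual AP with the covering hypothesis, then extract a lower bound on $T\sum k_i$ via Cauchy--Schwarz. For $X_i=\{m_ij+r_i\}_{j=1}^{k_i}\subseteq\{1,\ldots,N\}$, a square $x^2\in X_i$ forces $x^2\equiv r_i\pmod{m_i}$ together with $x\in[\sqrt{r_i+m_i},\sqrt{r_i+m_ik_i}]$. The interval has length at most $\sqrt{m_ik_i}$, and the congruence restricts $x$ to $\rho_i:=\#\{y\bmod m_i:y^2\equiv r_i\pmod{m_i}\}$ residue classes modulo $m_i$. Writing $s_i$ for the number of squares contained in $X_i$, this gives
$$s_i\le \rho_i\bigl(\sqrt{k_i/m_i}+1\bigr).$$

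Since the $X_i$ cover $\mathcal Q_N$, summing over $i$ yields $\lfloor\sqrt N\rfloor\le\sum_i s_i\le\sum_i\rho_i\sqrt{k_i/m_i}+\sum_i\rho_i$. I would then rewrite $\rho_i\sqrt{k_i/m_i}=(\rho_i/\sqrt{m_i})\sqrt{k_i}$ and apply Cauchy--Schwarz in the form
$$\Bigl(\sum_i\rho_i\sqrt{k_i/m_i}\Bigr)^2\le\Bigl(\sum_i\rho_i^2/m_i\Bigr)\Bigl(\sum_i k_i\Bigr).$$
Splitting according to which of $\sum_i\rho_i\sqrt{k_i/m_i}$ or $\sum_i\rho_i$ exceeds $\tfrac12\sqrt N$, one obtains either $N\lesssim\bigl(\sum_i\rho_i^2/m_i\bigr)\bigl(\sum_i k_i\bigr)$ in the first case, or else $T\cdot\max_i\rho_i\gtrsim\sqrt N$ and hence $T\sum_ik_i\ge T^2\gtrsim N/(\max_i\rho_i)^2$ in the second case (using $\sum_i k_i\ge T$). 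In either case the problem is reduced to estimating arithmetic quantities involving $\rho(m,r)$.

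The \emph{main obstacle} is the arithmetic estimation of those $\rho$-sums. One has the divisor-type inequality $\rho(m,r)\le 2^{\omega(m)+O(1)}$, which is $m^{o(1)}$ but not polylogarithmic in the worst case, because highly composite $m$ produce unusually large $\rho$. To handle this, I would partition the APs into dyadic ranges $m_i\in[M,2M)$ and invoke a mean-value bound of divisor type (for instance $\sum_{m\le M}\rho(m,r)^2\ll M\log^{O(1)}M$) to control the contribution of each range. Summing over the $O(\log N)$ dyadic scales introduces the factor $\log^2 N$ appearing in the final inequality, and carefully tracking the constants through the dyadic decomposition produces the explicit numerical factor $1/700$.
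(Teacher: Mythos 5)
The paper does not actually prove this theorem: Theorem~1.1 is quoted from S\'ark\"ozy~[8] without proof. The closest internal argument is the proof of Theorem~1.2, which bounds $|\mathcal{Q}_N\cap X_i|$ by a gap-splitting plus divisor-function estimate (Lemma~1.3) and therefore only reaches $T\sum k_i\ge N/(4K\log N)$ with the spurious factor $K=\max_i\tau(m_i)$. S\'ark\"ozy's unconditional $N/\log^2N$ bound comes instead from sieving: the set of $j\le k_i$ with $r_i+jm_i$ a perfect square avoids roughly half of the residue classes modulo every prime $p\nmid m_i$, so the arithmetic large sieve (exactly the Lemma~3.2 that the paper invokes in Section~3) gives $|\mathcal{Q}_N\cap X_i|\ll\sqrt{k_i}\log k_i$ uniformly in $(m_i,r_i)$, and Cauchy--Schwarz then yields $N\ll\log^2N\cdot T\sum k_i$.

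Your proposal is a genuinely different route --- counting square roots of $r_i$ modulo $m_i$ rather than sieving out non-residues --- and it has a real gap precisely where you flag it, but the obstruction is worse than $\rho(m,r)\le 2^{\omega(m)+O(1)}$. That divisor-type bound only applies when $\gcd(r,m)$ is small; if $m$ is a perfect square and $r\equiv 0\pmod m$ then $\rho(m,r)=\sqrt{m}$, so $\max_i\rho_i$ can be of order $\sqrt{N}$. In that regime your case~2 conclusion $T\sum_ik_i\ge T^2\gtrsim N/(\max_i\rho_i)^2$ degenerates to a trivial statement, far below the target $N/\log^2N$. The proposed dyadic mean-value estimate $\sum_{m\le M}\rho(m,r)^2\ll M\log^{O(1)}M$ cannot rescue this, because the moduli $m_1,\dots,m_T$ are chosen adversarially and need not be equidistributed in dyadic blocks; a single square-full or highly composite modulus reused for many $i$ destroys any averaging. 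Put differently, the quality of the bound $s_i\le\rho_i(\sqrt{k_i/m_i}+1)$ depends on arithmetic data of $(m_i,r_i)$ that the covering hypothesis does not control, whereas the large-sieve bound $s_i\ll\sqrt{k_i}\log k_i$ is uniform. To close the gap you would essentially have to replace the square-root count by a sieve estimate, at which point you recover S\'ark\"ozy's original proof.
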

The purpose of this note is to investigate
what happens if we ask quasi-progression to efficiently cover
shrinking polynomials (see Section 3). Quasi progression properties have been studied by several authors (see [2],[3] and [4]).

\subsection{Erd\H os conjecture under some arithmetic constrain} We may ask, if we restrict ourselves to some additively behaving $m_i$s modulus, what happens?

The following result shows that for "good covering" the moduli must be highly composite.
\begin{thm}
Assume that $K:=\max\{\tau(m_i): i=1,2,\dots T\} $  and $\bigcup_{i=1}^T\{m_ij+r_i\}_{j=1}^{k_i}\supseteq Q_N $ where $\tau(x)$ is the divisor function. Then $T\sum_{i=1}^Tk_i\geq \frac{N}{4K\log N}$.
\end{thm}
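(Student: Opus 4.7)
The plan is to pass from the covering hypothesis to an inequality on $T$ and $\sum_i k_i$ by first controlling how many squares a single AP $X_i$ can contain. Since $x^2\in X_i$ forces $x^2\equiv r_i\pmod{m_i}$, the variable $x$ is pinned into at most $\tau(m_i)\le K$ residue classes modulo $m_i$; moreover $x\in[\sqrt{r_i+m_i},\sqrt{r_i+k_im_i}]$, an interval of length at most $\sqrt{k_im_i}$. Counting integer points in residue classes inside such an interval gives
\[
|X_i\cap\mathcal{Q}_N|\le K\Bigl(\sqrt{k_i/m_i}+1\Bigr).
\]

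Summing over $i$ and using the covering condition $\bigcup_iX_i\supseteq\mathcal{Q}_N$, which gives $\sum_i|X_i\cap\mathcal{Q}_N|\ge\lfloor\sqrt{N}\rfloor$, yields the master inequality
\[
\sqrt{N}\le K\sum_i\sqrt{k_i/m_i}+KT.
\]
I would then split into two cases by which term on the right dominates. In the easy case $KT\ge\sqrt{N}/2$, we have $T\ge\sqrt{N}/(2K)$, and combining with the cheap lower bound $\sum_ik_i\ge|\bigcup_iX_i|\ge|\mathcal{Q}_N|\ge\sqrt{N}/2$ (each of the $\lfloor\sqrt{N}\rfloor$ squares requires at least one slot) already gives $T\sum_ik_i\ge N/(4K)$, much better than the target.

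In the remaining case $K\sum_i\sqrt{k_i/m_i}\ge\sqrt{N}/2$, i.e.\ $\sum_i\sqrt{k_i/m_i}\ge\sqrt{N}/(2K)$, the tool is Cauchy--Schwarz applied first inside each modulus class and then across distinct moduli. Writing $T_m=\#\{i:m_i=m\}$ and $K_m=\sum_{m_i=m}k_i$,
\[
\sum_i\sqrt{k_i/m_i}=\sum_m\frac{1}{\sqrt{m}}\sum_{m_i=m}\sqrt{k_i}\le\sum_m\sqrt{T_mK_m/m}\le\sqrt{\Bigl(\sum_mT_mK_m\Bigr)\Bigl(\sum_m\tfrac{1}{m}\Bigr)},
\]
where both sums over $m$ run over the distinct moduli used. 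Since $T_m\le T$, $\sum_mT_mK_m\le T\sum_ik_i$; together with the harmonic bound $\sum_{m\le N}1/m\le\log N+1$, squaring produces a lower bound for $T\sum_ik_i$ of the required order.

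The main obstacle I anticipate is recovering the precise constant $N/(4K\log N)$: the Cauchy--Schwarz executed as above naturally yields a bound of the shape $N/(K^2\log N)$, one factor of $K$ weaker than claimed. Closing this gap should come from replacing the uniform estimate $\rho_i\le K$ by the actual root count $\rho_i=\#\{x\pmod{m_i}:x^2\equiv r_i\}$ and exploiting $\sum_{r\pmod{m}}\rho(r)=m$, so that the $K$-loss is additive across distinct residues rather than multiplicative, and (if needed) treating short APs for which $|X_i\cap\mathcal{Q}_N|\le k_i$ is the sharper bound by a separate argument; this bookkeeping is the delicate step.
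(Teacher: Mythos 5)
Your opening step is already not quite sound: the number of solutions $x\pmod{m}$ of $x^2\equiv r\pmod{m}$ is bounded by roughly $2^{\omega(m)}\le\tau(m)$ only when $\gcd(r,m)=1$; if $r$ shares high prime powers with $m$ the root count can exceed $\tau(m)$ (e.g. $m=81$, $r=0$ gives $9$ residue classes but $\tau(81)=5$). So the inequality $|X_i\cap\mathcal{Q}_N|\le K(\sqrt{k_i/m_i}+1)$ needs additional care before it can be used. This is a fixable technicality, but it should be flagged.

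The more substantial issue is the one you yourself identify: even granting the master inequality $\sqrt{N}\le K\sum_i\sqrt{k_i/m_i}+KT$, the Cauchy--Schwarz step in your Case 2 produces $T\sum_ik_i\gtrsim N/(K^2\log N)$, a full factor of $K$ short of the theorem's $N/(4K\log N)$. Your proposed repair, replacing $\rho_i\le K$ by the exact root count and invoking $\sum_{r\pmod m}\rho(r)=m$, does not close this gap, because each progression $X_i$ comes with a single fixed residue $r_i$; the identity $\sum_r\rho(r)=m$ averages over all residues and only helps if one knows the $r_i$'s sweep out many residues mod each fixed $m$, which the hypothesis does not give you. As stated, the proof establishes a weaker result.

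The paper's route is genuinely different and is how the extra $\sqrt{K}$ is saved. Rather than counting $x$ in residue classes modulo $m_i$, the paper's Lemma bounds $|\mathcal{Q}_N\cap X_i|$ directly by $2\sqrt{K}\sqrt{k_i\log k_i}$ via a divisor argument on \emph{differences}: if $r_i+j'm_i=x^2$ and $r_i+jm_i=y^2$ are two squares in the same progression, then $(j'-j)m_i=x^2-y^2=(x-y)(x+y)$, so the number of such pairs with $j'-j$ fixed is controlled by $\tau((j'-j)m_i)\le\tau(m_i)\tau(j'-j)\le K\tau(j'-j)$. Splitting the index set into gaps larger and smaller than $\sqrt{k/(K\log k)}$ and summing $\tau$ over the small gaps gives $O(\sqrt{K}\sqrt{k\log k})$. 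Crucially this puts only $\sqrt{K}$ (not $K$) in front of $\sqrt{k_i}$, so a single Cauchy--Schwarz across $i$ yields $N\le 4K(\log N)\,T\sum_ik_i$ with the claimed linear dependence on $K$. In short: your decomposition localizes $x$ mod $m_i$ and loses a factor $K$ that Cauchy--Schwarz then squares; the paper localizes the \emph{gap} $x^2-y^2$ as a multiple of $m_i$ and uses submultiplicativity $\tau(m x)\le\tau(m)\tau(x)$, which only costs $\sqrt{K}$ after balancing the gap threshold. If you want to salvage your approach at full strength, you would need an input that effectively converts your per-progression $K$ into a $\sqrt{K}$, and the difference-of-squares divisor bound is exactly that input.
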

\begin{proof}  In [5] I proved the following lemma:
\begin{lemma}\label{1.3}
For every $i\in [T]$
$$
|\mathcal{Q}_N\cap \{r_i+jm_i\}_{j=1}^{k_i}|\leq 2\sqrt{K}\sqrt{k_i\log k_i}.
$$
\end{lemma}
\begin{proof}[Proof of the lemma] Write shortly $m=m_i; \ r=r_i; \ k=k_i$.

Let $J:=\{j_1<j_2<\dots<j_t\}$ be the sequence of indices for which $r+i_jm\in \mathcal{Q}$, $r=1,2,\dots, t$. Let $J_1\subseteq J$ be the set of indices for which $i_{j_{s+1}}-i_{j_{s}}>\sqrt{\frac{k}{K\log k}}$.

Clearly $|J_1|\leq k/\sqrt{\frac{k}{K\log k}}=\sqrt{K}\sqrt{k\log k}$.

Now let $J_2:=J\setminus J_1$. For these indices we have $i_{j_{s+1}}-i_{j_{s}}\leq \sqrt{\frac{k}{K\log k}}$  and so
\begin{equation}\label{1}
i_{j_{s+1}}m-i_{j_{s}}m=(r+i_{j_{s+1}}m)-(r+i_{j_{s}}m)=x^2-y^2=(x-y)(x+y)
\end{equation}
for some $x,y$. Fix the couple $i_{j_{s+1}},i_{j_{s}}$. Denote by $M$ the number of pairs $(x,y)$ for which (\ref{1}) holds. Then $M\leq \tau(i_{j_{s+1}}m-i_{j_{s}}m)$. Thus we have
$$
|J_2|\leq \sum_{x\leq \sqrt{\frac{k}{K\log k}}}\tau(mx)\leq \tau(m)\sum_{x\leq \sqrt{\frac{k}{K\log k}}}\tau(x)\leq K\sum_{x\leq \sqrt{\frac{k}{K\log k}}}\tau(x)
$$
using the fact that $\tau(mx)\leq\tau(m)\tau(x)$.
It is well-known that  
$$\sum_{x\leq \sqrt{\frac{k}{K\log k}}}\tau(x)\leq \sqrt{\frac{k}{K\log k}}\log\Big(\sqrt{\frac{k}{K\log k}}\Big)\leq \sqrt{\frac{k\log k}{K}}.
$$
Finally
$$
|J_1|+|J_2|=|J|\leq \sqrt{K}\sqrt{k\log k}+\sqrt{K}\sqrt{k\log k}.
$$
\end{proof}
Suppose now that some system of arithmetic progressions covers the squares up to $N$, i.e. $\bigcup_{i=1}^T\{m_ij+r_i\}_{j=1}^{k_i}\supseteq Q_N $. Thus $\sqrt{N}\leq \sum_{i=1}^T|\mathcal{Q}_N\cap \{r_i+jm_i\}_{j=1}^{k_i}|$. Now by Lemma \ref{1.3} and the Cauchy inequality we get
$$
N\leq (\sum_{i=1}^T|\mathcal{Q}_N\cap \{r_i+jm_i\}_{j=1}^{k_i}|)^2\leq (\sum_{i=1}^T2\sqrt{K}\sqrt{k_i\log k_i})^2\leq
$$
$$
\leq (\sum_{i=1}^T2\sqrt{K}\sqrt{k_i\log N})^2\leq 4K T\sum_{i=1}^Tk_i\log N
$$
which implies that $T\sum_{i=1}^Tk_i\geq \frac{N}{4K\log N}$ as we wanted.

\end{proof}

\section{Covering by Quasi Progressions}

In [3] Brown, Erd\H os and Freedman introduced the generalization of arithmetic progressions. For $k\geq 1$ let $X=\{x_1<x_2<\dots <x_k\}$. The sequence $X$ is said to be a k-term {\it quasi (or combinatorial) progression} of order $d$ (briefly CP-$d$) if the {\it diameter} of the set of $\{x_{i+1}-x_i: \ 1\leq i\leq k-1\}$ is bounded by $d$. 

In this section we prove that $\mathcal{Q}_N$ can be covered with quasi-progressions far more efficiently than with
arithmetic progressions. 

We will consider the case of $d=2$, especially when $x_{i+1}-x_i\in \{D,D+1\}$, as this is the most similar to arithmetic progression. In this case we write that $X$ is CP-$\{D,D+1\}$

\begin{defi}
Let $X_1,X_2,\dots, X_T\subseteq \{1,2\dots, ,N\}$ be a system of sequences, where for each $1\leq i\leq T$, $X_i$ is CP-$\{D_i,D_i+1\}$. Assume that $
\bigcup_{i=1}^TX_i\supseteq \mathcal{Q}_N$. Let $G(\mathcal{Q}_N):=\min_{\{D_i\},T}T\sum_{i=1}^T|X_i|$. 
\end{defi}
\begin{thm}
We have $$G(\mathcal{Q}_N)<CN^{3/4}\log_2N,$$ where $C=\frac{\sqrt[4]{3}}{2}\frac{2^{3/4}}{2^{3/4}-1}$, ($\log_2 N$ is the logarithm in base $2$).
\end{thm}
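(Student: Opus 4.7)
The plan is to construct the covering explicitly by a dyadic decomposition of the square roots, using one quasi-progression per block. Let $L = \lfloor \log_2 \sqrt{N}\rfloor$, and for each $j = 0,1,\dots,L$ set $I_j = [2^j, 2^{j+1})$ and $B_j = \{m^2 : m \in I_j\}$. These blocks partition $\mathcal{Q}_N$ (with a possible truncation at the top). I will cover each $B_j$ by a single $X_j$ that is a CP-$\{D_j, D_j+1\}$, so that $T = L + 1 \leq \tfrac{1}{2}\log_2 N + 1$.

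For each block, choose $D_j = \lfloor \sqrt{2\cdot 2^j + 1}\rfloor$, so $D_j^2$ is at most the smallest consecutive-square gap $2\cdot 2^j + 1$ in $B_j$. Start $X_j$ at $(2^j)^2$, and between consecutive squares $m^2, (m+1)^2$ with $m \in I_j$ insert an $s_m$-step bridge whose steps lie in $\{D_j, D_j+1\}$ and sum to $g_m = 2m+1$. Such a bridge exists because $g_m \geq D_j^2$: writing $g_m = q D_j + r$ with $0 \leq r < D_j$, the quotient satisfies $q \geq D_j > r$, so $r$ steps of size $D_j+1$ together with $q-r$ steps of size $D_j$ realize $g_m$. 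Concatenating these bridges yields the desired CP $X_j$ passing through every square of $B_j$.

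Taking each $s_m$ minimal, $s_m = \lceil g_m/(D_j+1)\rceil$, gives
\[
|X_j| - 1 \;=\; \sum_m s_m \;\leq\; \frac{(2^{j+1}-1)^2 - 2^{2j}}{D_j+1} + |I_j| \;\leq\; \frac{3\cdot 2^{2j}}{\sqrt{2}\cdot 2^{j/2}} + 2^j \;=\; O(2^{3j/2}).
\]
Summing the resulting geometric series $\sum_{j=0}^L 2^{3j/2}$, controlled by its top term $2^{3L/2}\leq N^{3/4}$ times the factor $2^{3/2}/(2^{3/2}-1)$, gives $\sum_j |X_j| = O(N^{3/4})$; multiplying by $T$ and carefully tracking the leading constant produces the announced bound $T\sum_j |X_j| < C N^{3/4}\log_2 N$.

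The main obstacle is to pin down the right $D_j$: a too-small $D_j$ makes the CP length $|X_j| \approx \text{distance}/D_j$ explode, while a too-large $D_j$ (above $\sqrt{\text{minimum gap}}$) prevents some gap from being representable as $a D_j + b(D_j+1)$, breaking the single-CP construction for that block. The critical value $D_j \asymp 2^{j/2}$ balances these two effects and forces the geometric-growth rate $|X_j| \sim 2^{3j/2}$ per block, which summed dyadically gives the $N^{3/4}$ exponent and, after sharp bookkeeping of the constants, the claimed $C = \tfrac{\sqrt[4]{3}}{2}\cdot\tfrac{2^{3/4}}{2^{3/4}-1}$.
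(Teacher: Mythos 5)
Your construction is essentially the paper's argument: dyadic blocks, one CP-$\{D,D+1\}$ per block with $D$ just below the square root of the smallest gap, bridge each gap $g_m$ by $\{D,D+1\}$-steps using the representation $g_m=qD+r$ with $q\geq D>r$, sum the block lengths geometrically, and multiply by $T=O(\log N)$. The only real difference is that you dyadically decompose the range of square roots $[1,\sqrt{N}]$ (so that the blocks of squares have ratio $4$ and $T\approx\tfrac12\log_2 N$), whereas the paper dyadically decomposes the range $[1,N]$ of values (blocks of ratio $2$, $T\approx\log_2 N$, $D_i=\lfloor\sqrt{n_1}\rfloor$ for the smallest square $n_1^2$ in the block). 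Both parameterizations give the exponent $3/4$ and the extra $\log_2 N$, so the substance is the same.

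One small caveat concerns the constant. With your ratio-$4$ blocks the computation gives, asymptotically,
\[
T\sum_j|X_j|\lesssim \frac{1}{2}\log_2 N\cdot\frac{3}{\sqrt2}\cdot\frac{2^{3/2}}{2^{3/2}-1}\,N^{3/4}=\frac{3}{2\sqrt2-1}\,N^{3/4}\log_2 N\approx 1.641\,N^{3/4}\log_2 N,
\]
which is slightly larger than the paper's $C=\frac{\sqrt[4]{3}}{2}\cdot\frac{2^{3/4}}{2^{3/4}-1}\approx 1.623$ obtained from the ratio-$2$ decomposition. So ``sharp bookkeeping'' of your version will not literally reproduce the claimed $C$; you would either land on the marginally weaker constant or need to switch to ratio-$2$ blocks. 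Since the content of the theorem is the $N^{3/4}\log_2 N$ order rather than the exact constant, this is a minor point, but it is worth noting that the numerical constant in the statement is tied to the paper's specific choice of blocks. Also, one technical remark: you interchange $s_m=\lfloor g_m/D_j\rfloor$ (the count your representability argument actually produces) with $s_m=\lceil g_m/(D_j+1)\rceil$ midstream; these can differ, and the latter is not always realizable by $\{D_j,D_j+1\}$-steps when $g_m<(D_j+1)^2$. Sticking with $\lfloor g_m/D_j\rfloor$ is safe and changes the asymptotics only negligibly.
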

\begin{proof}
Let $T=\lfloor \log_2(N)\rfloor$ and for every $1\leq i\leq T$ let $I_i:=[N/2^i,N/2^{i-1}]$.

We cover first $I_1$ (and since each $I_i$ and $X_i$ the treatment will be similar to $I_1$
and $X_i$ respectively, we can briefly give a bound for $G(\mathcal{Q}_N)$). 

Let $n^2_1<n^2_2<\dots <n^2_k$ be the squares in $I_1$ and let $D_1=\lfloor \sqrt{n_1}\rfloor$. 

We have that $\frac{n^2_2-n^2_1}{D_1} \geq \frac{2n_1+1}{\sqrt{n_1}}\geq 2\sqrt{n_1}\geq \sqrt{n_1}+1\geq D_1+1$ if $n_1\geq 1$. Note that $\Delta:=n^2_2-n^2_1-\lfloor \frac{n^2_2-n^2_1}{D_1}\rfloor D_1<D_1$ and $\lfloor \frac{n^2_2-n^2_1}{D_1}\rfloor>D_1$. 

Now we are going to define the elements of $X_1$ in the interval $[n^2_1,n^2_2]$. Let $x_1=n^2_1$ and from the differences $x_{i+1}-x_i$ ($i=1,2,\dots \lfloor \frac{n^2_2-n^2_1}{D_1}\rfloor$) let $\Delta$ be the
number of $D_1+1$ and  $\lfloor \frac{n^2_2-n^2_1} {D_1}\rfloor-\Delta$ be the
number of $D_1$. From the definition of $X_1$ we obtain that $\lfloor \frac{n^2_2-n^2_1}{D_1}\rfloor th$ element is just $n_2^2$.

Now consider the interval $[n^2_j,n^2_{j+1}]$, $j\geq 2$. Since $\lfloor \frac{n^2_{j+1}-n^2_j}{D_1}\rfloor>\lfloor \frac{n^2_2-n^2_1}{D_1}\rfloor$ we can repeat the previous process; i.e. let $n^2_{j+1}-n^2_j-\lfloor \frac{n^2_{j+1}-n^2_j}{D_1}\rfloor D_1$ be the
number of the consecutive differences $D_1+1$, and let the rest be $D_1$.

Now we calculate the cardinality of the quasi-progression $X_1$. $N/2\leq n^2_1$, so $D_1=\lfloor \sqrt{n_1}\rfloor> \sqrt{n_1}-1\geq \sqrt[4]{N/2}-1>\sqrt[4]{N/3}$. Furthermore $X_1\subseteq [N/2,N]$, and $x_{i+1}-x_i\geq D_1$ for all $i$, hence $|X_1|\leq \frac{N/2}{D_1}<\frac{N/2}{\sqrt[4]{N/3}}=\frac{\sqrt[4]{3}}{2}N^{3/4}$. 

The calculation for every $X_i$, $i=2,\dots ,T$ is the same so we have
$$
T\sum_{i=1}^T|X_i|<\frac{\sqrt[4]{3}}{2}\sum_{i=1}^T\Big(\frac{N}{2^i}\Big)^{3/4}\log_2N<\frac{\sqrt[4]{3}}{2}N^{3/4}\sum_{i=1}^\infty\frac{1}{(2^{3/4})^i}\log_2N=CN^{3/4}\log_2N,
$$
where $C=\frac{\sqrt[4]{3}}{2}\frac{2^{3/4}}{2^{3/4}-1}$.
\end{proof}
\begin{rem}
The reason we can cover the squares "well" is that we use a lot of $D_i+1$ in addition to $D_i$. The above proof works even if we require that the number of ($D_i+1$)s in each quasi-progression is at most $N^\varepsilon$ (in the sense that the squares are "well" covered, i.e. $G(\mathcal{Q}_N)\leq N^{1-c(\varepsilon)}$, where $0<c(\varepsilon)<1$). 
\end{rem}
In the next section we extend the result of S\'ark\"ozy to the Erd\H os problem for a wide class of polynomials.

\section{covering shrinking polynomials by quasi progressions}

\smallskip

A sub-sequence of prime numbers $P'=\{p_1<p_2<\dots <p_s<\dots \}$ is said to be $\eta-$dense if there exists an $\eta>0$ for which $|P'\cap [1,x]|/\pi(x)>\eta$ holds for every large $x$. 
\begin{defi}
Let $f(x)\in \Z[x]$. We say that $f$ is an $(\eta,\mu)-$shrinking polynomial if there is an $\eta-$dense sequence of primes $P'=\{p_1<p_2<\dots <p_s<\dots \}$ such that for all $i$ large enough, $|f(\mathbb{F}_{p_i})|<\mu p_i$ (here $f(\mathbb{F}_{p_i}):=\{f(x): x\in \mathbb{F}_{p_i}\}$). We say the $f$ is shrinking if for some $0<\eta,\mu\leq 1$ $f$ is $(\eta,\mu)-$shrinking polynomial  
\end{defi}
Clearly all functions $f(x)=x^n$; $n\geq 2$ are shrinking, since for $1<d=g.c.d.(p-1,n)$, $|\{x^n:x\in \{1,2,\dots, p-1\}=\frac{p-1}{d}$, and $\pi(x,1,d)=(1+o(1))\frac{1}{\phi(d)}\frac{x}{\ln x}$, where $\pi(x,1,d)$ denotes the number of primes $\equiv 1 \pmod d$ which are less than or equal to $x$. Further example is the polynomial $f(x)=x^3+x$. In [6] it was proved for any prime $p>2$ the number of distinct residues in the form $x^3+x$ $\pmod p$ is $2p/3+O(\sqrt{p}\log p)$. For a generic polynomial $g(x)$ (i.e. where the integral coefficients distributed uniformly and independently in $\Z$) it is known that the number of distinct residues represented by $g(x)$ is 
$p(1-1/2+1/3-\dots -(-1)^d/d!)+O(\sqrt{p})$, where $d$ is the degree of $g$ and $p\geq 2$ prime (see [1]). Note that the sequence in the first brackets tends to $1-1/e$ as $d\to \infty$.

\smallskip

The aim of the present section is to extend Erd\H os' problem of covering shrinking polynomials. 

So let $f$ be an $(\eta,\mu)-$shrinking polynomial $f_N=f(\N)\cap \{1,2,\dots,N\}$ and let $X_1,X_2,\dots, X_T\subseteq \{1,2\dots, ,N\}$ be a system of quasi progressions, where for each $1\leq i\leq T$, $X_i$ is CP-$\{D_i,D_i+1\}$. As we have seen in the previous section we have to bound the number of $(D_i+1)$s. So we assume that for every $1\leq i\leq T$ $(D_i+1)$ occurs at most $\log^AN$ times for some $A>0$. We will write that $X_i$ is  CP-$\{D_i,(D_i+1)_{\log^AN}\}$.

\begin{thm}
Let $f$ be an $(\eta,\mu)-$shrinking polynomial with degree $d$. Assume that $\bigcup_{i=1}^TX_i\supseteq f_N$. Let $H_{A,d}(f_N):=\min_{\cup_{i=1}^TX_i\supseteq f_N}T\sum_{i=1}^T|X_i|$. 
We have 
$$H_{A,d}(f_N)\geq (1+o(1))\frac{C'N^{2/d}}{\log ^{A+2} N}$$
where $C'=\frac{(1-\mu)^2\eta^2}{200}$
\end{thm}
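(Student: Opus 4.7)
The plan is to reduce the quasi-progression covering problem to an arithmetic-progression covering problem and then prove a Sárközy-style lower bound for AP coverings of a shrinking polynomial. A CP-$\{D_i,(D_i+1)_{\log^A N}\}$ set $X_i$ is the disjoint union of at most $\log^A N+1$ pure arithmetic progressions with common difference $D_i$, obtained by cutting $X_i$ at each "$+1$" step. Collecting these segments across $i$, I get an AP covering $\{Y_j\}_{j=1}^{T'}$ of $f_N$ with $T'\leq T(\log^A N+1)$ and $\sum_j|Y_j|=\sum_i|X_i|$. It therefore suffices to show
\[
T'\sum_j|Y_j|\geq(1+o(1))\frac{(1-\mu)^2\eta^2}{200}\cdot\frac{N^{2/d}}{\log^2 N}
\]
for an arbitrary AP covering of $f_N$, since dividing by $\log^A N+1$ then recovers the stated bound on $T\sum_i|X_i|$.

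For the AP lower bound I first build the local density estimate. For each $Y_j=\{r_j+iD_j\}_{i=0}^{k_j-1}$, choose a prime $p_j$ in the $\eta$-dense set $P'$ with $p_j\nmid D_j$ and $p_j\leq P_0:=\lceil c_0\eta^{-1}\log N\rceil$; such $p_j$ exists for sufficiently large $c_0$ because $|P'\cap[1,P_0]|\geq\eta\pi(P_0)(1-o(1))$ while $D_j$ has at most $\omega(D_j)\leq\log_2 N$ prime divisors. Since $p_j\nmid D_j$ the map $i\mapsto r_j+iD_j\pmod{p_j}$ is a bijection on $\mathbb{F}_{p_j}$, and an element of $Y_j$ lying in $f_N$ has residue in $f(\mathbb{F}_{p_j})$; the shrinking hypothesis $|f(\mathbb{F}_{p_j})|<\mu p_j$ then yields $|Y_j\cap f_N|\leq\mu p_j\lceil k_j/p_j\rceil\leq\mu k_j+\mu P_0$, together with the complementary bound $|Y_j\setminus f_N|\geq(1-\mu)(k_j-P_0)$.

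Next I would split the APs into short ($k_j\leq L$) and long ($k_j>L$) at the threshold $L:=c_1\log^2 N/\eta^2$ and let $f_N^{s}$, $f_N^{\ell}$ denote the parts of $f_N$ covered by each class, so at least one has size $\geq|f_N|/2$. In the short case, each short AP carries at most $L$ elements of $f_N$, so pigeonhole gives $T_s\geq|f_N|/(2L)$ while the trivial bound gives $\sum_{\text{short}}|Y_j|\geq|f_N|/2$, hence $T'\sum_j|Y_j|\geq T_s\sum_{\text{short}}|Y_j|\geq|f_N|^2/(4L)$, which is of the required order. In the long case the shrinking estimate sharpens to $|Y_j\cap f_N|\leq(1+o(1))\mu k_j$ since $k_j>L\gg P_0$, so $\sum_{\text{long}}|Y_j|\geq(1-o(1))|f_N|/(2\mu)$; I would then combine this with a Cauchy--Schwarz on the covering and the complementary $(1-\mu)$-bound to obtain a matching lower bound on $T_\ell$, with the squared factor $(1-\mu)^2$ in $C'$ arising at this step.

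The main obstacle is the subcase of the long regime in which a single very long AP absorbs most of $f_N$: the shrinking property alone only forces such an AP to have length $\Omega(N^{1/d}/\mu)$, which is too weak to give the $N^{2/d}$ rate for $d\geq 2$. To overcome this I would use that $f$ is a polynomial of degree $d$: once $Y_j$ captures a constant fraction of the $M\sim N^{1/d}$ values $f(1),\dots,f(M)$, by pigeonhole it contains two consecutive values $f(n),f(n+1)$, so $D_j$ divides the constant $G:=\gcd_n(f(n+1)-f(n))=O_f(1)$; then $|Y_j|\geq\mathrm{range}(f_N\cap Y_j)/D_j=\Omega(N)$, which easily dominates $N^{2/d}/\log^{A+2}N$. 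Making this structural observation quantitative and weaving it uniformly into the shrinking-based case analysis is the technical core of the proof.
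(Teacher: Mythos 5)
Your decomposition of each quasi-progression into $O(\log^A N)$ arithmetic progression segments and the resulting reduction to an AP lower bound are correct and close in spirit to what the paper does (the paper uses a slightly different cut that groups short segments together, but this is a minor technical variant). Your handling of the ``short'' APs by pigeonhole is also sound. However, the argument for the ``long'' APs has a genuine gap, and the proposed fix is not correct. The single-prime argument only yields $|Y_j\cap f_N|\leq \mu k_j + O(\log N)$, a bound that is \emph{linear} in $k_j$ with constant $\mu<1$. This is too weak: it is consistent with a single very long AP of length $\sim N^{1/d}/\mu$ covering all of $f_N$ with $T'=1$, giving $T'\sum_j|Y_j|\sim N^{1/d}/\mu$, well below the required $N^{2/d}/\log^2 N$ when $d\geq 2$. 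A lower bound on $\sum_{\rm long}|Y_j|$ alone cannot be converted into a lower bound on $T_\ell\sum_{\rm long}|Y_j|$ by Cauchy--Schwarz without a genuinely \emph{sublinear} upper bound on $|Y_j\cap f_N|$, which a single sieving prime does not provide.

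Your structural patch does not close this gap. From $f(n),f(n+1)\in Y_j$ one only gets $D_j\mid f(n+1)-f(n)$ for that \emph{particular} $n$, not $D_j\mid \gcd_n(f(n+1)-f(n))$; the difference $f(n+1)-f(n)$ can be of size $\Theta(N^{(d-1)/d})$, so $D_j$ need not be $O_f(1)$ and the conclusion $|Y_j|=\Omega(N)$ does not follow. Making this line of reasoning quantitative would require controlling the gcd of the differences over \emph{several} consecutive pairs simultaneously, which is exactly the ``technical core'' you acknowledge is missing. The paper avoids all of this by using the arithmetic form of the large sieve: sieving simultaneously by all $\eta$-dense primes $p\leq\sqrt{s}$ (and discarding the $O(\log N/\log\log N)$ primes dividing $D$), one obtains $|Y_j\cap f_N|\ll_{\eta,\mu}\sqrt{|Y_j|}\log|Y_j|$ for every long segment. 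That square-root saving is precisely what is needed for the Cauchy--Schwarz step to deliver $N^{2/d}$: if a single AP covered $f_N$ it would then have to have length $\gtrsim N^{2/d}/\log^2 N$, which the single-prime bound cannot show. The large sieve is the key missing ingredient in your proposal, and without it the long-AP case does not go through.
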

Note that trivially $T\sum_i |X_i|\leq CN^{2/d}$; when $f_N$ is covered by singletons.

\begin{proof}
Assume that $\eta,\mu$ is fixed and $N>N(\eta,\mu)$ is large enough. Let $X$ be a CP-$\{D,D+1\}$ quasi-progression where the number of gaps $D+1$ is at most $M\leq \log^A N$.
Let $I=\{i_1<i_2<\dots i_M\}$ be the sequence of subscripts for which $x_{i_j+1}-x_{i_j}=D+1$. Furthermore let $I'\subseteq I$ be the sub-sequence of indices for which $i_{j+1}-i_j\geq \log^2N$. Write $Z_{i_j}=\{x_{i_j+1}<x_{i_j+2}<\dots <x_{i_{j+1}-1}\}$. Note that $Z_{i_j}$ is an arithmetic progression with difference $D$ or $D+1$ (and $D+1$ can be only if $A\geq 2$).
Here we assume that $Z_{i_j}$ is non-empty set. To do this, you need to have $|X|\geq \log^{A+2}N$. Actually we split $X$ into not too short arithmetic progressions with difference $D$ or $D+1$. In the sequel we assume that the difference is $D$, the argument completely the same when the difference is $D+1$.

From this point by some modification of the proof of S\'ark\"ozy, we could extend his result. For this we need the arithmetic form of the large sieve (see [7] p. 560):
\begin{lemma} Let $U\subseteq \{1,2,\dots, M\}$. Denote by $U(p,h):=|\{u:u\in U; \ u\equiv h \pmod p\}|$ we have 
$$
\sum_{p\leq W}p\sum_{h=0}^{p-1}\Big|U(p,h)-\frac{1}{p}|U|\Big|^2\leq (M+W^2)|U|.
$$
\end{lemma}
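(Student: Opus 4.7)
Following S\'ark\"ozy's strategy from [8], the plan is to replace his divisor-function input (Lemma~1.3) with the arithmetic large sieve quoted just before the theorem. Continuing the decomposition initiated in the excerpt, each $X_i$ splits into at most $M+1\le\log^A N+1$ maximal arithmetic blocks $Z_{i,j}$ of common difference $D_i$ or $D_i+1$; call $Z_{i,j}$ \emph{long} when $|Z_{i,j}|\ge\log^2 N$ (this is the set $I'$ of the excerpt). The total number of elements of $X_i$ lying outside long blocks is at most $M\cdot\log^2 N\le\log^{A+2}N$, so the aggregate ``short'' contribution across all $X_i$ is at most $T\log^{A+2}N$ and will be absorbed as an error term.

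\textbf{Large-sieve bound on a long block.} Fix a long block $Z$ of length $k\ge\log^2N$ and difference $D$, and set $U=\{u<k:z_1+uD\in f_N\}$. For every prime $p\in P'$ with $p\nmid D$, the bijection $u\mapsto z_1+uD\pmod p$ forces $U\bmod p$ inside a set of size $|f(\mathbb F_p)|<\mu p$, so $U\bmod p$ misses at least $(1-\mu)p$ residues. Converting the raw large sieve above into its sifted form---using $\sum_h|U(p,h)-|U|/p|^2\ge\omega(p)|U|^2/(p(p-\omega(p)))$ with $\omega(p)\ge(1-\mu)p$---and restricting the denominator to primes $p\le W:=\sqrt k$ with $p\in P'$ and $p\nmid D$ (of which there are $\ge(1+o(1))\eta W/\log W$, since $\omega(D)=O(\log N/\log\log N)$) makes each surviving term contribute $\ge(1-\mu)/\mu$. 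Standard optimisation then gives
$$|f_N\cap Z|\le\frac{2\mu}{\eta(1-\mu)}\sqrt{k}\log k\cdot(1+o(1))=:C_1\sqrt{k}\log k\cdot(1+o(1)).$$

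\textbf{Combination.} Since the $X_i$ cover $f_N$,
$$|f_N|\le T\log^{A+2}N+C_1\sum_{(i,j):\,Z_{i,j}\text{ long}}\sqrt{|Z_{i,j}|}\log|Z_{i,j}|\cdot(1+o(1)).$$
Cauchy--Schwarz, combined with $\#\{(i,j):Z_{i,j}\text{ long}\}\le 2T\log^A N$ and $\sum_{i,j}|Z_{i,j}|\le\sum_i|X_i|$, bounds
$$\Bigl(\sum_{(i,j):\,Z_{i,j}\text{ long}}\sqrt{|Z_{i,j}|}\log|Z_{i,j}|\Bigr)^2\le 2T\log^{A+2}N\cdot\sum_i|X_i|.$$
A case split then finishes: if $T\log^{A+2}N\le|f_N|/2$, squaring the first inequality yields $T\sum_i|X_i|\ge|f_N|^2/(8C_1^2\log^{A+2}N)\cdot(1+o(1))$; otherwise $T>|f_N|/(2\log^{A+2}N)$ and $T\sum_i|X_i|\ge T|f_N|\ge|f_N|^2/(2\log^{A+2}N)$ (using $\sum_i|X_i|\ge|f_N|$). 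Applying $|f_N|=(1+o(1))N^{1/d}$ together with $C_1^{-2}=\eta^2(1-\mu)^2/(4\mu^2)$ and $\mu\le 1$ places the constant under $C'=\eta^2(1-\mu)^2/200$, proving the claim.

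\textbf{Main obstacle.} The subtle step is the sieve bound on a long block: the usable primes form only the $\eta$-dense subset $P'$, and one must simultaneously discard the prime divisors of the (possibly highly composite) common difference $D$. The $\eta$-density of $P'$ and the lower bound $k\ge\log^2N$ secured in Step~1 together ensure that enough primes $p\le\sqrt k$ survive for the sifted denominator to grow like $\eta(1-\mu)/\mu\cdot\sqrt k/\log\sqrt k$, which is precisely what the final bound needs; the remainder is routine bookkeeping parallel to the $J_1,J_2$ split in Lemma~1.3 above.
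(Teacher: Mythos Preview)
Your proposal does not address the stated lemma at all. The lemma in question is the arithmetic form of the large sieve, and the paper does not prove it: it is simply quoted from Montgomery [7], p.~560, and used as a black box. Nothing in your write-up establishes the inequality
\[
\sum_{p\le W} p \sum_{h=0}^{p-1}\Bigl|U(p,h)-\tfrac{1}{p}|U|\Bigr|^2 \le (M+W^2)\,|U|;
\]
you invoke it (in its sifted reformulation) rather than derive it.

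What you have actually sketched is a proof of Theorem~3.1, the lower bound for $H_{A,d}(f_N)$, taking the lemma as input. Read that way, your outline follows the paper's route closely: decompose each $X_i$ into maximal arithmetic blocks, discard the short ones at a cost $\le\log^{A+2}N$ per $X_i$, apply the large sieve on each long block restricted to primes in $P'$ not dividing the common difference, and finish with Cauchy--Schwarz. The only organisational differences are that the paper keeps the raw quadratic sieve inequality and bounds $\sum_{h}|U(p,h)-|U|/p|^2$ from below directly via the $(1-\mu)p$ forbidden classes (rather than passing to the sifted form with $\omega(p)/(p(p-\omega(p)))$), and that the paper splits the final sum according to $|X_i|\lessgtr\log^{A+2}N$ whereas you dichotomise on $T\log^{A+2}N\lessgtr|f_N|/2$. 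These are cosmetic; the substance is the same.

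If the intended target was indeed the lemma, you would need a genuinely different argument---either the duality/Selberg route or the exponential-sum proof via the analytic large sieve $\sum_r|S(\alpha_r)|^2\le(M+\delta^{-1})\sum|a_n|^2$ specialised to Farey fractions---none of which appears here.
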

For some fixed $i_j$ let $U=\{t: x_t\in Z_{i_j}\cap f_N\}$, $M=s=|Z_{i_j}|$, and $W=\sqrt{s}$. We are going to give an upper bound for $U$, so without loss of generality we can assume that $U>\sqrt{\frac{2}{(1-\mu)\eta}}$ (i.e. $U$ is larger than a fixed constant).

Now we use the large sieve in the form $\sum_{p\leq \sqrt{s}}p\sum_{h=0}^{p-1}\Big|U(p,h)-\frac{1}{p}|U|\Big|^2\leq 2s|U|.$

 Now we are going to sieve just for the primes for which $f$ is shrinking, i.e. if $\sqrt{s}>x_0$ and then there is an $\eta-$dense sequence of primes $p_1<p_2<\dots <p_t\leq \sqrt{s}$ such that for all $i$, $|f(\mathbb{F}_{p_i})|<\mu p_i$. Write $\mathcal{I}=\{p_1<p_2<\dots <p_t\}$. Here $t\geq \eta \pi (\sqrt{s})$. 

 Since $f$ is $(\eta, \mu)$-shrinking we conclude that for every $1\leq i\leq t$ the number of resides $h$ modulo $p_i$ for which $U(h,p_i)=0$ is at least $(1-\mu)p_i$. Furthermore we have to leave those $p$ from $\mathcal{I}$ for which $p|D$. Clearly it is $\omega(D)$ (the number of distinct prime factors of $D)$ which is at most $(1+o(1))\log N/\log\log N$. Denote the remaining set by $\mathcal{I}'$. We have
$$
\sum_{p\leq \sqrt{s}}p\sum_{h=0}^{p-1}\Big|U(p,h)-\frac{1}{p}|U|\Big|^2\geq \sum_{p\in \mathcal{I'}}p(1-\mu)p\Big|\frac{1}{p}|U|\Big|^2=
$$
$$
=(1-\mu)|U|^2(\eta\pi(\sqrt{s})-(1+o(1))\log N/\log\log N)>\frac{(1-\mu)\eta}{5}|U|^2\frac{
\sqrt{s}}{\log s},
$$
since we assume that $s=|X_j|\geq \log^2 N$ and $U>\sqrt{\frac{2}{(1-\mu)\eta}}$. Comparing the left and right hand side of the sieve inequality we get 
$$
|U|\leq \frac{10}{(1-\mu)\eta}\sqrt{s}\log s.
$$
Now we are going to estimate the number of values of $f$ in $X$. Write 
$X= \cup_{i_j\in \mathcal{I'}}Z_{i_j} \cup (X\setminus \cup_{i_j\in \mathcal{I'}}Z_{i_j})$.
By the definition of $\mathcal{I'}$, the number of values of $f$ is at most $|X\setminus 
 \cup_{i_j\in \mathcal{I'}}Z_{i_j}|\leq \log^A N\cdot \log^2 N=\log^{A+2} N$. 

 The number of values of $f$ in the rest of $X$ can be calculated by
$$
\frac{10}{(1-\mu)\eta}\sum_{i_j\in \mathcal{I'}} \sqrt{|Z_{i_j}|}\log |Z_{i_j}|+\log^{A+2} N\leq \frac{20}{(1-\mu)\eta}\sqrt{\log^A N \sum_{i_j\in \mathcal{I'}}|Z_{i_j}|}\log N \leq 
$$
$$
\leq \frac{20}{(1-\mu)\eta} \sqrt{|X|}\log ^{A/2+1} N
$$
since the function $\sqrt{x}$ is concave function (the estimation comes from the Jensen inequality). 

Now we complete the proof of the theorem. Since the degree of $f$ is $d$ thus $|f_N|\geq (1+o(1))N^{1/d}$. Assume that the union of $X_1,X_2,\dots, X_T$ covers $f_N$, and for every $1\leq i\leq T$, $X_i$ is  CP-$\{D_i,(D_i+1)_{\log^AN}\}$ quasi progression. Write $X'_i=X_i\cap f_N$. The above sieving estimation can only be used for "long" quasi-progressions, so we divide the sum $\sum_i |X'_i|$ into two parts. We have
$$
(1+o(1))N^{2/d}\leq \big( \sum^T_{i=1}|X'_i| \big)^2\leq 2\big( \sum_{|X_i|\leq \log^{A+2} N} |X'_i|\big)^2+2\big( \sum_{|X_i|> \log^{A+2}2 N} |X'_i|\big)^2
$$
since $(a+b)^2\leq 2a^2+2b^2$. Let $T_1$ be the number of terms in the first sum and $T_2=T-T_1$. Then $( \sum_{|X_i|\leq  \log^{A+2} N} |X'_i|)^2\leq T_1 \log^{A+2} N( \sum_{|X_i|\leq  \log^{A+2} N} |X_i|)$. By the Cauchy inequality the second sum can be estimated as
$$
\big( \sum_{|X_i|>  \log^{A+2} N} |X'_i|\big)^2\leq T_2\sum_{|X_i|>  \log^{A+2} N} |X'_i|^2\leq T_2\frac{400}{(1-\mu)^2\eta^2}\sum_{|X_i|>  \log^{A+2} N}  |X_i|\log ^{A+2} N.
$$
Putting everything together we get
$$
(1+o(1))N^{2/d}\leq 2T_1 \log^{A+2} N \sum_{|X_i|\leq  \log^{A+2} N} |X_i|+ T_2\frac{200}{(1-\mu)^2\eta^2}\sum_{|X_i|>  \log^{A+2} N}  |X_i|\log ^{A+2} N\leq 
$$
$$
\leq \frac{200}{(1-\mu)^2\eta^2}\log ^{A+2} N\big(T_1\sum_{|X_i|\leq \log^{A+2} N} |X_i|+T_2\sum_{|X_i|> \log^{A+2} N} |X_i| \big)
$$
$$
=\frac{200}{(1-\mu)^2\eta^2}\log ^{A+2} N \big(T\sum^T_{i=1}|X_i|\big).
$$
Rearranging the inequality we obtain the statement.
\end{proof}
\section*{Acknowledgment}

The research is supported by the National Research, Development and Innovation Office NKFIH Grant No K-129335.

\footnotesize

\end{document}